\newtheorem{theorem}{Theorem}[section]
\newtheorem{prop}[theorem]{Proposition}
\newtheorem{question}[theorem]{Question}
\theoremstyle{definition}
\newtheorem{remark}[theorem]{Remark}
\newcommand{\stack}[2]{ \mathcal{M}^\mathrm{Kss}_{#1, #2} } %K-moduli stack
\newcommand{\modspace}[2]{M^\mathrm{Kps}_{#1, #2}} %K-moduli space
\DeclareMathOperator\DefqG{Def^{\mathrm{qG}}}
\DeclareMathOperator\Aut{Aut}
\newcommand{\epsi}{\varepsilon}
\newcommand{\rmd}{\mathrm{d}}
\newcommand{\Xprime}{{X'}}
\newcommand{\into}{\hookrightarrow} %monomorfismo
\DeclareMathOperator{\Hom}{Hom} %spazio degli omomorfismi
\DeclareMathOperator{\Spec}{Spec} %spettro di un anello
\DeclareMathOperator{\Pic}{Pic}
\DeclareMathOperator{\Spf}{Spf} %formal spectrum
\newcommand{\bmu}{\boldsymbol{\mu}}
\def\conv#1{\mathrm{conv} \left\{ #1  \right\} } %convex hull
\def\pow#1{[ \! [ #1 ] \! ] }
\newcommand\cM{\mathcal{M}}
\newcommand\scrM{\mathscr{M}}
\newcommand\cO{\mathcal{O}}
\newcommand\cX{\mathscr{X}}
\renewcommand\AA{\mathbb{A}}
\newcommand\CC{\mathbb{C}}
\newcommand\PP{\mathbb{P}}
\newcommand\QQ{\mathbb{Q}}
\newcommand\RR{\mathbb{R}}
\newcommand\ZZ{\mathbb{Z}}
\newcommand\rH{\mathrm{H}}
\title[A 1-dimensional component of K-moduli of del Pezzo surfaces]{A 1-dimensional component \\ of K-moduli of del Pezzo surfaces}
\author{Andrea Petracci}
\address{Dipartimento di Matematica, Universit\`a di Bologna, Piazza di Porta San Donato 5, 40126 Bologna, Italy}
\email{a.petracci@unibo.it}
\begin{document}
   \begin{abstract}
We explicitly construct a component of the K-moduli space of K-polystable del Pezzo surfaces which is a smooth rational curve.
\end{abstract}
   \maketitle
   
   \section{Introduction}
   
One of the most important and recent results in K-stability and in the theory of Fano varieties is the construction of K-moduli \cite{ABHLX, xu_minimizing, BLX, jiang_boundedness, blum_xu_uniqueness, xu_zhuang, properness_K_moduli, projectivity_K_moduli_final, lwx}.
It has been proved that, for every positive integer $n$ and every positive rational number $V$, $\QQ$-Gorenstein families of K-semistable Fano varieties over $\CC$ of dimension $n$ and anticanonical volume $V$ form an algebraic stack $\stack{n}{V}$ of finite type over $\CC$. Moreover, this stack admits a good moduli space $\modspace{n}{V}$, which is a projective scheme over $\CC$, and the set of closed points of $\modspace{n}{V}$ coincides with the set of K-polystable Fano varieties over $\CC$ of dimension $n$ and anticanonical volume $V$.
We refer the reader to \cite{xu_survey} for a survey on these topics.

The case of smoothable del Pezzo surfaces has been extensively studied 
\cite{odaka2016, odaka_compact, mabuchi_mukai}.
Moreover, K-moduli are understood for cubic $3$-folds  \cite{liu_xu_cubic},
cubic $4$-folds \cite{liu_4folds},
and for certain pairs $(S,C)$ where $S$ is a surface and $C$ is a curve on $S$ \cite{ADL19, ADL20}.
	
The goal of this note is to show how toric geometry and deformation theory can help understanding the geometry of explicit components of K-moduli.
Similar ideas were used
in \cite{ask_petracci}   to construct examples of reducible or non-reduced K-moduli of Fano $3$-folds (see also \cite{petracci_murphy, petracci_embedded}),
 in \cite{liu_petracci}   to study the K-stability of certain del Pezzo surfaces with Fano index $2$,
 and in \cite{jesus_cristiano} to study the dimension of K-moduli.
In this note we analyse a specific example of K-polystable toric del Pezzo surface and we prove the following:

\begin{theorem} \label{thm:main}
There exists a connected component of $\modspace{2}{\frac{22}{15}}$ which is isomorphic to $\PP^1$.
\end{theorem}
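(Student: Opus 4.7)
The plan is to exhibit an explicit K-polystable toric del Pezzo surface $X_0$ of anticanonical volume $\frac{22}{15}$, specified by a carefully chosen Fano polygon $P \subset N_\RR = \RR^2$, together with a second K-polystable toric del Pezzo $X_\infty$ of the same volume, related to $X_0$ by a combinatorial mutation; I will then show that the two points $[X_0], [X_\infty] \in \modspace{2}{22/15}$ lie on a smooth rational curve which forms a connected component. K-polystability of both central fibres will be checked by the barycentre criterion for the moment polytope $P^\vee \subset M_\RR$. I expect the singularities of $X_0$ and $X_\infty$ to be combinations of $\QQ$-Gorenstein rigid cyclic quotient singularities (contributing nothing to $\DefqG$) and $T$-singularities (each admitting a one-parameter $\QQ$-Gorenstein partial smoothing).

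The central tool is the local structure of the K-moduli stack: \'{e}tale neighbourhoods of $[X_0]$ or $[X_\infty]$ in $\stack{2}{22/15}$ have the form $[\DefqG(X_\bullet)/\Aut(X_\bullet)]$, and the corresponding \'{e}tale neighbourhoods in $\modspace{2}{22/15}$ are the good moduli spaces $\DefqG(X_\bullet)/\!/\Aut(X_\bullet)$. The automorphism groups are read off the combinatorics of the Fano polygons (identity component generated by the toric torus together with any Demazure root subgroups). The spaces $\DefqG(X_\bullet)$ are computed using Altmann's Minkowski decomposition theory on each affine toric chart, combined with the vanishing of global obstructions for projective toric Fano surfaces, in the spirit of the computations carried out in \cite{ask_petracci, petracci_embedded}. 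I expect each local good moduli space to be isomorphic to $\AA^1$.

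To globalise, I will construct an explicit $\QQ$-Gorenstein family $\mathcal{X} \to \PP^1$ whose fibres over $0$ and $\infty$ are $X_0$ and $X_\infty$, and whose generic fibre is a K-polystable non-toric del Pezzo surface obtained from either toric degeneration by smoothing a $T$-singularity. This family can be described as a weighted hypersurface in a suitable toric ambient via the Minkowski decomposition that relates the two Fano polygons. The resulting modular morphism $\PP^1 \to \modspace{2}{22/15}$ agrees with the two local descriptions at $[X_0]$ and $[X_\infty]$, so it is a closed immersion whose image is open and closed in $\modspace{2}{22/15}$ and is therefore a connected component isomorphic to $\PP^1$.

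The main obstacle will be the precise computation of $\DefqG(X_\bullet)$ as an $\Aut(X_\bullet)$-scheme. Altmann's theorem handles each affine toric singularity individually, but assembling the local pieces into a global deformation functor on the projective variety requires controlling the global obstruction group $T^2(X_\bullet)$, and the $\Aut(X_\bullet)$-action must be tracked carefully across the gluing to determine the correct quotient. A secondary subtlety is verifying that every fibre of $\mathcal{X} \to \PP^1$ is K-semistable of the prescribed volume, so that the morphism actually lands in $\stack{2}{22/15}$; for this I expect to combine toric K-stability for the central fibres $X_0, X_\infty$ with a uniform $\delta$-invariant estimate on intermediate fibres using equivariant valuations coming from the toric structure.
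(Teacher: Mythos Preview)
Your overall framework---choose a K-polystable toric del Pezzo via a Fano polygon, compute $\DefqG$ together with the $\Aut$-action, and realise deformations as hypersurfaces in a toric ambient---matches the paper. The paper's surface $X$ comes from a centrally symmetric hexagon $P$; its singular locus is $2\times\tfrac{1}{3}(1,1)$, $2\times\tfrac{1}{4}(1,1)$, $2\times\tfrac{1}{5}(1,2)$, only the two $\tfrac{1}{4}(1,1)$ points deform, the hull of $\DefqG(X)$ is $\CC\pow{t_1,t_2}$, and the $\Aut(X)=(\CC^*)^2\rtimes C_2$ invariants are $\CC\pow{t_1 t_2}$. So the component $M$ is a smooth projective curve, exactly as you predict for the local picture.

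The gap is in your globalisation. You posit a second K-polystable \emph{toric} surface $X_\infty$ obtained from $X_0$ by mutation, and an explicit family $\cX\to\PP^1$ joining them. But a single mutation of the centrally symmetric hexagon $P$ along one $\tfrac{1}{4}(1,1)$ edge destroys the central symmetry, so the barycentre criterion no longer applies and you have no argument for K-polystability of the mutated toric surface. More to the point, the K-polystable locus in the formal neighbourhood of $[X]$ consists of $X$ itself and the surfaces with $t_1 t_2\neq 0$ (both $\tfrac{1}{4}(1,1)$ points smoothed simultaneously), and these are generically \emph{non-toric}; there is no evident second toric point on $M$ to play the role of $X_\infty$. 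Your plan to glue two $\AA^1$-charts into a $\PP^1$ therefore lacks the second chart.

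The paper sidesteps all of this. It never constructs a global $\PP^1$-family, never needs an $X_\infty$, and never checks K-semistability away from a neighbourhood of $[X]$. Instead: (i) the local analysis already shows $M$ is a smooth projective irreducible curve; (ii) a $2$-parameter hypersurface family $\cX\to\AA^2$ in a toric $3$-fold $Y$ realises the miniversal $\QQ$-Gorenstein deformation of $X$, and by \emph{openness of K-semistability} one restricts to an open $U\ni 0$ to obtain a non-constant morphism $U\to M$; (iii) restricting further to a general line through the origin exhibits $M$ as unirational, whence $M\simeq\PP^1$ by L\"uroth. Your ``secondary subtlety'' (uniform K-semistability of all fibres) is thus a non-issue in the paper's route: only an open neighbourhood of the central fibre matters, and openness handles that for free.
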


\begin{comment}
Since it is conjectured that, for every $n$ and $V$,
 $\modspace{n}{V}$ is a projective scheme over $\CC$
\cite{codogni_zsolt, },
the connected component of $\modspace{2}{\frac{22}{15}}$ constructed in Theorem~\ref{thm:main} should be a whole $\PP^1$.
\end{comment}

It is natural to wonder about the following:

\begin{question}
	Does there exist $V \in \QQ_{>0}$ such that a connected component of $\modspace{2}{V}$ is
	 a smooth curve of positive genus?
	\end{question}

\subsection*{Outline}
In \S\ref{sec:deformations_1/4(1,1)} we briefly recall the deformation theory of the surface singularity given by the cone over the rational normal curve of degree $4$.
In \S\ref{sec:surface_X} we introduce a Fano polygon $P$ and a K-polystable toric del Pezzo surface $X$, and we analyse its deformation theory; in particular, we show that the connected component of the K-moduli space of K-polystable del Pezzo surfaces that contains $X$ is smooth and $1$-dimensional.
In \S\ref{sec:scaffolding} we prove that $X$ is a hypersurface in a toric $3$-fold $Y$ and in \S\ref{sec:linear_system_of_X_in_Y} we prove that deforming $X$ inside the linear system $\vert \cO_Y(X) \vert$ on $Y$ gives the versal deformation of $X$.
This gives a non-constant morphism from an open subset of $\vert \cO_Y(X) \vert$ to the K-moduli space.
In \S\ref{sec:conclusion_proof} we conclude the proof of Theorem~\ref{thm:main}.
In \S\ref{sec:mirror_symmetry} we sketch what mirror symmetry says in this context.

\subsection*{Notation and conventions}
We work over an algebraically closed field of characteristic zero, which is denoted by $\CC$.
A \emph{Fano} variety is a normal projective variety over $\CC$ such that its anticanonical divisor is $\QQ$-Cartier and ample.
A \emph{del Pezzo} surface is a Fano variety of dimension $2$.
We assume that the reader is familiar with toric geometry \cite{cox}.
Every toric variety we consider is normal.

If $r, a_1, \dots, a_n$ are integers and $r \geq 1$, then the symbol $\frac{1}{r}(a_1,\dots,a_n)$ stands for the quotient of $\AA^n$ under the action of the cyclic group $\bmu_r$ defined by $\zeta \cdot (x_1, \dots, x_n) = (\zeta^{a_1} x_1, \dots, \zeta^{a_n} x_n)$ for every $\zeta \in \bmu_r$.
We use the same symbol to indicate the \'etale-equivalence class of the singularity of this quotient variety at the image of the origin of $\AA^n$.

\subsection*{Acknowledgements}
The author learnt most of the techniques and the ideas described in this note during countless conversations with Tom Coates, Alessio Corti, Al Kasprzyk and Thomas Prince over the years; it is a pleasure to thank them.

\section{Proof}

\subsection{Deformations of $\frac{1}{4}(1,1)$} \label{sec:deformations_1/4(1,1)}

The cyclic quotient singularity $\frac{1}{4}(1,1)$ is the affine cone over the $4$th Veronese embedding of $\PP^1$ into $\PP^4$.
The deformations of this singularity have been studied by Pinkham \cite[\S4]{pinkham}. Here we concentrate on the $\QQ$-Gorenstein deformations -- see \cite[\S2]{petracci_murphy} for a quick recap.

The singularity $\frac{1}{4}(1,1)$ has Gorenstein index $2$. Its index $1$ cover is $\frac{1}{2}(1,1)$, which is the hypersurface singularity $(xy - z^2 = 0)$ in $\AA^3_{x,y,z} = \Spec \CC[x,y,z]$. Therefore $\frac{1}{4}(1,1)$ is the closed subscheme of the $3$-fold quotient singularity $\frac{1}{2}(1,1,1)_{x,y,z}$
given, with respect to the orbifold coordinates $x,y,z$, by the equation $xy - z^2 = 0$.

Since the miniversal deformation of $\frac{1}{2}(1,1)$ is given by $xy - z^2 + t = 0$ in $\AA^3_{x,y,z}$ over $\CC \pow{t}$,
we have that the miniversal $\QQ$-Gorenstein deformation of $\frac{1}{4}(1,1)$ is given by
\begin{equation} \label{eq:miniversal_deformation_1411}
	xy - z^2 + t = 0
\end{equation}
 inside $\frac{1}{2}(1,1,1)_{x,y,z}$ over $\CC \pow{t}$.
This specifies a formal morphism
\begin{equation} \label{eq:equivalence_qGdef_1411}
\mathrm{Spf}(\CC \pow{t}) \longrightarrow \DefqG \left( \frac{1}{4}(1,1) \right),
\end{equation}
which is smooth and induces an isomorphism on tangent spaces. Here $\mathrm{Spf}$ denotes the formal spectrum of a local noetherian $\CC$-algebra.
We will always use this morphism when considering the $\QQ$-Gorenstein deformation functor of the singularity $\frac{1}{4}(1,1)$.

\begin{comment}
\begin{remark} \label{rmk:sloppiness}
We always pretend that deformation functors are prorepresentable, although this is not true.
In other words, we always identify a deformation functor with the formal spectrum of its hull, i.e.\ with the base of the miniversal deformation.
Therefore, we always pretend that a smooth morphism of deformation functors which induces an isomorphism on tangent spaces is actually an isomorphism.
We do apologise for our sloppiness, but somehow we think that these wrong assumptions are justifiable in the context of this paper and help us be concise.
\end{remark}
\end{comment}

\medskip

Now we make a calculation which will be useful in \S\ref{sec:linear_system_of_X_in_Y}. Consider the $2$-parameter deformation
\begin{equation} \label{eq:equazione_deformazione_1411}
xy - z^2 + s_1 + s_2 z^4 = 0
\end{equation}
in $\frac{1}{2}(1,1,1)_{x,y,z}$ over $\CC \pow{s_1, s_2}$.
By versality this deformation comes from the miniversal deformation \eqref{eq:miniversal_deformation_1411} via pull-back along a formal morphism
\begin{equation} \label{eq:formal_morphism}
\mathrm{Spf}(\CC \pow{s_1,s_2}) \longrightarrow \mathrm{Spf}(\CC \pow{t}) \overset{\eqref{eq:equivalence_qGdef_1411}}\longrightarrow \DefqG \left( \frac{1}{4}(1,1) \right),
\end{equation}
which is induced by a local $\CC$-algebra homomorphism $\CC \pow{t} \to \CC \pow{s_1, s_2}$.
Via the automorphism of $\frac{1}{2}(1,1,1)_{x,y,z} \times \mathrm{Spf}(\CC \pow{s_1, s_2})$ given by
\begin{equation*}
z \mapsto z \sqrt{1 - s_2 z^2} = \sum_{n = 0}^\infty \frac{(2n)!}{4^n (n!)^2 (1-2n)} s_2^n z^{n+1}
\end{equation*}
we get an isomorphism of the deformation \eqref{eq:equazione_deformazione_1411} with $xy - z^2 + s_1 = 0$, which is exactly the miniversal deformation~\eqref{eq:miniversal_deformation_1411} once we use the equality $t = s_1$. Therefore the morphism in 
\eqref{eq:formal_morphism} is induced by the local $\CC$-algebra homomorphism $\CC \pow{t} \to \CC \pow{s_1, s_2}$ given by $t \mapsto s_1$.

\subsection{The surface $X$} \label{sec:surface_X}
In the lattice $N = \ZZ^2$ consider the polygon $P$ which is the convex hull of the points
\begin{equation*}
\begin{pmatrix}
2 \\ 1
\end{pmatrix}
, \
\begin{pmatrix}
1 \\ 2
\end{pmatrix}
, \
\begin{pmatrix}
-1 \\ 2
\end{pmatrix}
, \
\begin{pmatrix}
-2 \\ -1
\end{pmatrix}
, \
\begin{pmatrix}
-1 \\ -2
\end{pmatrix}
, \
\begin{pmatrix}
1 \\ -2
\end{pmatrix}
\end{equation*}
and is depicted in Figure~\ref{fig:esagono_nero_rosso}.
(The meaning of the red segments in this figure will be clear in \S\ref{sec:scaffolding}.)
It is clear that $P$ is a \emph{Fano polytope}, i.e.\ it is a lattice polytope such that the origin is in the interior and the vertices are primitive lattice points.
Because of this we can consider the face fan (also called spanning fan) of $P$: this is the collection of cones (with apex at the origin) over the faces of $P$; it is made up of $6$ rational cones in $N$.

\begin{figure}
	\includegraphics[width=0.4\textwidth]{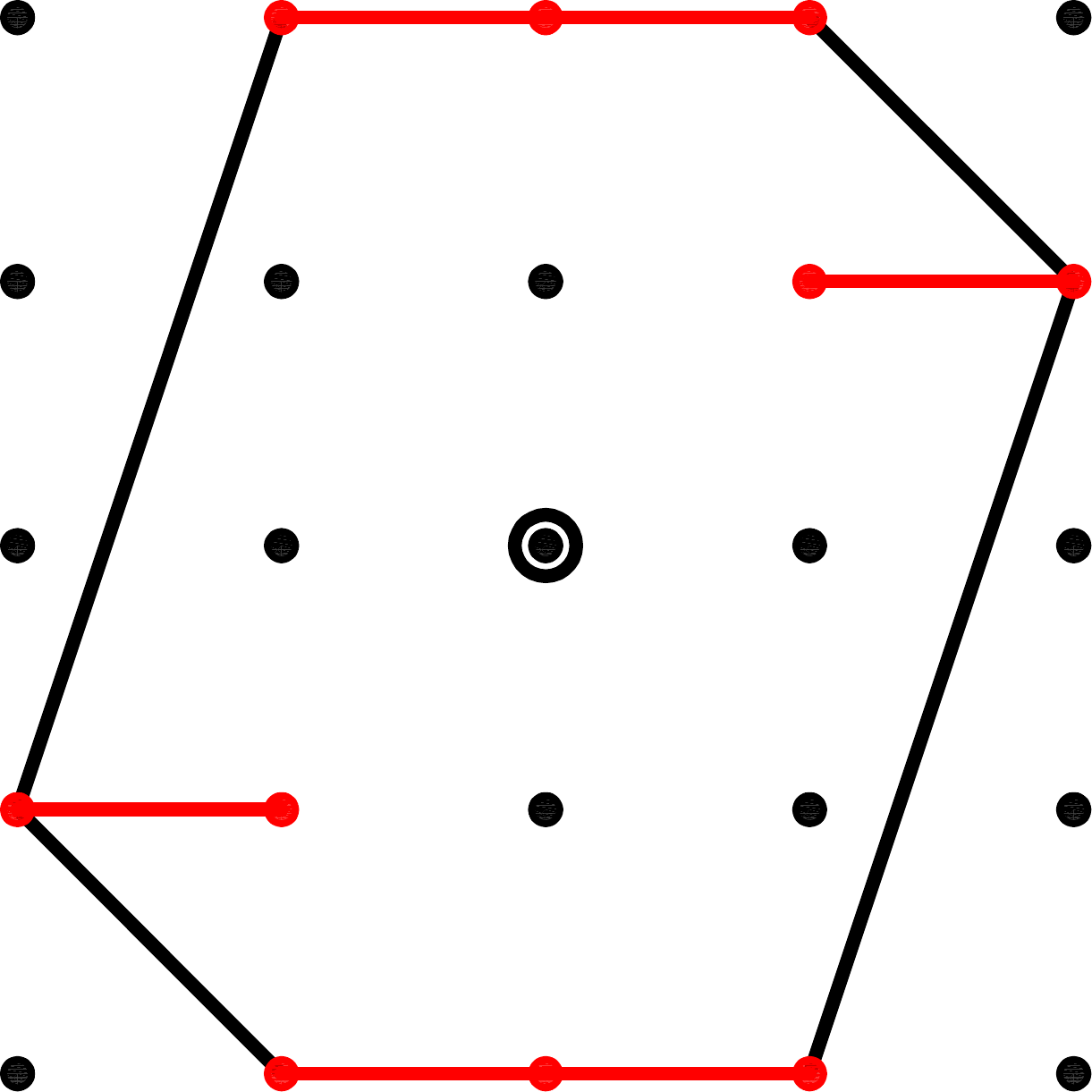}
	\caption{The polygon $P$ in \S\ref{sec:surface_X}}
	\label{fig:esagono_nero_rosso}
\end{figure}

\begin{prop} Let $X$ be the toric variety associated to the face fan of $P$. Then:
\begin{enumerate}
	\item $X$ is a K-polystable toric del Pezzo surface with anticanonical volume $\frac{22}{15}$;
	\item the surface $X$ has exactly $6$ singular points: $2$ points of type $\frac{1}{3}(1,1)$, $2$ points of type $\frac{1}{4}(1,1)$, $2$ points of type $\frac{1}{5}(1,2)$;
	\item the automorphism group $\Aut(X)$ is isomorphic to $(\CC^*)^2 \rtimes C_2$, where $C_2$ is the cyclic group of order $2$ and the non-trivial element of $C_2$ acts on $(\CC^*)^2$ via $(z,w) \mapsto (z^{-1}, w^{-1})$.
\end{enumerate}
\end{prop}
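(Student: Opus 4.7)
The plan is to verify the three claims as essentially independent toric computations on $P$, exploiting throughout the central symmetry $v_i \mapsto -v_i = v_{i+3}$.

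\smallskip

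For (2) and the volume in (1): for each pair of adjacent vertices $v_i, v_{i+1}$ of $P$, the associated torus-fixed point of $X$ is a cyclic quotient singularity of order $\lvert \det(v_i, v_{i+1}) \rvert$, which takes the values $3,4,5,3,4,5$ around the hexagon. The singularity type is read off by writing a generator of $N/(\ZZ v_i + \ZZ v_{i+1})$ in the basis $(v_i, v_{i+1})$: for instance $(0,1) = \frac{1}{4} v_2 + \frac{1}{4} v_3$ gives $\frac{1}{4}(1,1)$, and analogous calculations at the other two cones yield $\frac{1}{3}(1,1)$ and $\frac{1}{5}(1,2)$. The symmetry $-\id$ pairs the six cones into the three listed classes. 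For the anticanonical volume I apply the standard toric formula $(-K_X)^2 = 2\,\mathrm{Area}(P^\circ)$, where the polar polytope $P^\circ \subset M_\RR$ has six vertices, one per edge of $P$, obtained by solving $\langle m, v_i\rangle = \langle m, v_{i+1}\rangle = -1$; the shoelace formula then gives $\mathrm{Area}(P^\circ) = 11/15$, hence $(-K_X)^2 = 22/15$.

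\smallskip

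For K-polystability I invoke the criterion of Berman and Wang--Zhu that a $\QQ$-Fano toric variety is K-polystable if and only if the barycenter of its anticanonical polytope is the origin. Since $-\id$ preserves $P$ and hence $P^\circ$, the barycenter of $P^\circ$ is fixed by $-\id$ and must therefore equal $0$.

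\smallskip

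For (3), the identity component $\Aut^0(X)$ is generated by $T = (\CC^*)^2$ together with the unipotent subgroups corresponding to Demazure roots of the fan, i.e.\ lattice points $m \in M$ with $\langle m, v_i\rangle = -1$ for some $i$ and $\langle m, v_j\rangle \geq 0$ for every other $j$. Fix such an $i$: for each pair $\{v_j, v_{j+3}\} = \{v_j, -v_j\}$ with $j \notin \{i, i+3\}$, the positivity conditions on both antipodes force $\langle m, v_j\rangle = 0$; as there are two such pairs, this gives $m = 0$, contradicting $\langle m, v_i\rangle = -1$. Hence $\Aut^0(X) = T$. The quotient $\Aut(X)/T$ coincides with the group of $GL_2(\ZZ)$-automorphisms of the spanning fan, which must preserve the cyclic sequence $(3,4,5,3,4,5)$ of cone determinants; a short case analysis on the images of $v_1$ and $v_2$ shows that only $\pm\id$ are possible. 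Finally, $-\id \in GL(M)$ acts on $T = \Spec \CC[M]$ by $(z,w) \mapsto (z^{-1}, w^{-1})$, giving the stated semidirect product.

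\smallskip

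The step requiring the most care is the enumeration of $GL_2(\ZZ)$-symmetries of the hexagon: several candidate reflections suggest themselves (for example $(x,y) \mapsto (y,x)$, which sends $v_1 \leftrightarrow v_2$), and ruling each of them out by tracking the image of a further vertex is quick but demands attention. The remaining computations are standard toric bookkeeping once the correct criteria are in hand.
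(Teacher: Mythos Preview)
Your argument is correct and, for parts (1) and (2), matches the paper's approach essentially line for line: polar polytope for the volume, central symmetry plus Berman's criterion for K-polystability, and a cone-by-cone determinant check for the singularities.

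For part (3) your route is more self-contained than the paper's. The paper invokes \cite[Proposition~2.8]{ask_petracci}, which says that if no facet of $P^\circ$ contains an interior lattice point then $\Aut(X) = T_N \rtimes \Aut(P)$, and then asserts $\Aut(P) = \{\pm\id_N\}$ without further comment. You instead argue directly: the antipodal pairing $v_j \leftrightarrow -v_j$ forces any Demazure root to vanish on two independent rays, killing $\Aut^0(X)/T$; and you pin down $\Aut(P)$ by observing that a lattice symmetry of the fan must act dihedrally on the cyclic word $(3,4,5,3,4,5)$ of cone indices, whose only symmetry is the half-turn. Your version has the advantage of being elementary and not relying on an external reference; the paper's has the advantage of brevity. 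The underlying content is the same, since the ``no interior lattice points on facets of $P^\circ$'' condition in the cited proposition is precisely what rules out Demazure roots.
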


\begin{proof}
	(1) Since $N$ has rank $2$, the dimension of $X$ is $2$. By a slight modification of \cite[Theorem~8.3.4]{cox}, since the fan of $X$ is the face fan of a Fano polytope, we have that $X$ is Fano.
	
	Let $P^\circ$ denote the polar of $P$ (see \cite[\S2.4]{ask_petracci}); it is a rational polytope in the dual lattice $M = \Hom_\ZZ(N,\ZZ)$ and is the moment polytope of the toric boundary of $X$, which is the reduced sum of the torus invariant prime divisors of $X$ and is an anticanonical divisor.
	The anticanonical volume of $X$ is the normalised volume of $P^\circ$, which is $\frac{22}{15}$.
	Here the normalised volume is the double of the Lebesgue measure: in this way the normalised volume of a unimodular simplex is $1$.
	Since $P$ is centrally symmetric (i.e.\ $P = -P$), also $P^\circ$ is centrally symmetric, hence the barycentre of $P^\circ$ is the origin. Therefore $X$ is K-polystable by \cite{berman_polystability}.
	
	In order to prove (2) one needs to analyse the six $2$-dimensional cones of the face fan of $P$ and apply \cite[\S10.1]{cox}. For instance, the two horizontal edges of $P$ give the two $\frac{1}{4}(1,1)$ singularities.
	
	(3) let $T_N$ denote the $2$-dimensional torus $N \otimes_\ZZ (\CC^*)^2 = \Spec \CC[M]$ which acts on $X$.
	Let $\Aut(P)$ be the group of the symmetries of $P$: it is generated by $-\mathrm{id}_N$.
	Since every facet of $P^\circ$ has no interior lattice points, by \cite[Proposition~2.8]{ask_petracci}  $\Aut(X)$ is the semidirect product $T_N \rtimes \{ \pm \mathrm{id}_N \}$.	
	\end{proof}

The points of type $\frac{1}{3}(1,1)$ and $\frac{1}{5}(1,2)$ are $\QQ$-Gorenstein rigid, i.e.\ they do not deform $\QQ$-Gorensteinly.
The $\QQ$-Gorenstein deformations of $\frac{1}{4}(1,1)$ have been considered in \S\ref{sec:deformations_1/4(1,1)}.

By \cite[Lemma~6]{procams} there are no local-to-global obstructions for $\QQ$-Gorenstein deformations of $X$, so the $\QQ$-Gorenstein smoothings of the two $\frac{1}{4}(1,1)$ points of $X$, which we denote $p_1$ and $p_2$, can be realised globally and simultaneously.
More precisely, since $\rH^i(X,T_X) = 0$ for $i \geq 1$ by \cite{petracci_survey}, the product of the restriction morphisms to the germs $(p_i \in X)$
\begin{equation} \label{eq:restriction_deformation_functor}
\DefqG(X) \longrightarrow \DefqG(p_1 \in X) \times \DefqG(p_2 \in X)
\end{equation}
is smooth and induces an isomorphism on tangent spaces.
So $\CC \pow{t_1, t_2}$ is the hull of $\DefqG(X)$
and $t_i$ is the $\QQ$-Gorenstein smoothing parameter of $(p_i \in X)$.
Here the parameter $t_i$ is defined through \eqref{eq:equivalence_qGdef_1411}.
In the next section we will realise the miniversal $\QQ$-Gorenstein deformation of $X$ in a linear system in a toric Fano $3$-fold.

\begin{prop} \label{prop:M_and_cM}
Let $\cM$ (resp.\ $M$) be the connected component of the K-moduli stack $\stack{2}{\frac{22}{15}}$ (resp.\ the K-moduli space $\modspace{2}{\frac{22}{15}}$) which contains the point corresponding to $X$.
Then $M$ is a smooth projective irreducible curve.
\end{prop}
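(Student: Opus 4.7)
The plan is to apply the local structure theorem for K-moduli at a K-polystable point, available from \cite{ABHLX, properness_K_moduli, projectivity_K_moduli_final}, which identifies the formal completion of $M$ at $[X]$ with $\Spec\!\left( \CC\pow{t_1,t_2}^{\Aut(X)} \right)$, where $\CC\pow{t_1,t_2}$ is the hull of $\DefqG(X)$ as computed just before the proposition. Everything therefore reduces to computing these invariants.

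The crux is to determine the $\Aut(X)$-weights on the deformation parameters $t_1, t_2$. The torus $T_N \subseteq \Aut(X)$ acts on each $t_i$ through a character $w_i \in M$; by $T_N$-equivariant deformation theory for the toric cyclic quotient singularities $p_i \in X$ (in the spirit of \cite{ask_petracci}), $w_i$ is a nonzero character determined by the edge of $P$ corresponding to $p_i$. Because the two $\frac{1}{4}(1,1)$-singularities are interchanged by $-\id_N \in \Aut(P) \subseteq \Aut(X)$, we have $w_2 = -w_1$. Moreover the $C_2$-factor of $\Aut(X)$ generated by $-\id_N$ swaps $t_1$ and $t_2$.

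Since $w_1, w_2$ are opposite and nonzero, the only weight-zero elements of $\CC\pow{t_1,t_2}$ are power series in $t_1 t_2$, so $\CC\pow{t_1,t_2}^{T_N} = \CC\pow{t_1 t_2}$; the monomial $t_1 t_2$ is fixed by the swap, hence $\CC\pow{t_1,t_2}^{\Aut(X)} = \CC\pow{t_1 t_2}$. This is a regular local ring of dimension $1$, and so $M$ is formally smooth of dimension $1$ at $[X]$.

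Globally, $M$ is projective as a closed subscheme of the projective scheme $\modspace{2}{\frac{22}{15}}$ and connected by construction; together with local smoothness of dimension $1$ at $[X]$, this already shows that the irreducible component of $M$ through $[X]$ is a smooth projective curve. The hardest part is to rule out other irreducible components of $M$: I expect this to follow by performing the same Luna-slice computation at every closed point $[X'] \in M$, since every such $X'$ is a $\QQ$-Gorenstein deformation of $X$ that inherits the one-dimensional subtorus of $T_N$ acting trivially on $(t_1, t_2)$ (the kernel of the weight map $(w_1,w_2)$), while the vanishing of local-to-global obstructions propagates from $X$ to $X'$. This should yield a uniform smooth one-dimensional local structure on $M$, and combined with connectedness it forces $M$ to be irreducible.
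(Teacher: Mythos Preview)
Your local computation at $[X]$ is essentially the paper's: both obtain $\CC\pow{t_1,t_2}^{\Aut(X)} = \CC\pow{t_1 t_2}$ from the opposite torus weights on $t_1,t_2$ and the $C_2$-swap, and then invoke the Luna \'etale slice theorem to identify this with the formal neighbourhood of $[X]$ in $M$.

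The gap is in your final paragraph. You only establish that $M$ is smooth of dimension~$1$ \emph{at} $[X]$, and then try to propagate this to every closed point by repeating the slice computation at each $[X']$. The sketch you give is not enough: knowing that a one-dimensional subtorus of $T_N$ acts on every small deformation $X'$ does not by itself determine $\dim \DefqG(X')$ or the ring of $\Aut(X')$-invariants (you would still need vanishing results at $X'$, and $\Aut(X')$ may a priori be larger than the inherited torus). Likewise, ``vanishing of local-to-global obstructions propagates from $X$ to $X'$'' is asserted, not proved.

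The paper bypasses all of this with a global input you are missing: by \cite[Lemma~6]{procams}, $\QQ$-Gorenstein deformations of \emph{every} del Pezzo surface are unobstructed, so by \cite[Remark~2.4]{ask_petracci} the stack $\cM$ is smooth \emph{everywhere} and its good moduli space $M$ is normal. Since $M$ is also projective and connected, once the single slice computation at $[X]$ shows $\dim M = 1$, normality forces $M$ to be a smooth irreducible projective curve. No pointwise analysis at the other $[X']$ is needed.
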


\begin{proof}
	Since $\QQ$-Gorenstein deformations of del Pezzo surfaces  are unobstructed by \cite[Lemma~6]{procams}, by \cite[Remark~2.4]{ask_petracci} we get that $\cM$ is smooth and $M$ is normal. Moreover $M$ is projective by \cite{projectivity_K_moduli_final}.

The automorphism group $\Aut(X)$ acts on the hull $\CC \pow{t_1, t_2}$.
The weights of $t_1$ (resp.\ $t_2$) in $M$ is $(0,1)$ (resp.\ $(0,-1)$).
Therefore the invariant subring of the formal action of $T_N = (\CC^*)^2$ on $\CC \pow{t_1, t_2}$ is $\CC \pow{t_1 t_2}$.
The group $C_2$ swaps $t_1$ and $t_2$, so it leaves $t_1 t_2$ invariant.
Therefore the invariant subring of the formal action of $\Aut(X)$ on $\CC \pow{t_1, t_2}$ is $\CC \pow{t_1 t_2}$.

By the Luna  \'etale slice theorem for algebraic stacks \cite{luna_etale_slice_stacks} the local structure of $\cM \to M$ is given by the  commutative square
\begin{equation*}
	\xymatrix{
		\left[ \Spf \CC \pow{t_1, t_2} \ / \ \Aut(X)  \right] \ar[d] \ar[r] & \cM \ar[d] \\
		\Spf \CC \pow{t_1 t_2} \ar[r] & M
	}
\end{equation*}
where the horizontal maps are formally \'etale and maps the closed point to $[X]$.
This implies that $M$ has dimension $1$.
Hence  $M$ is a smooth projective curve.
\end{proof}

\subsection{The $3$-fold $Y$ and the proof of Theorem~\ref{thm:main}}
\label{sec:conclusion_proof}

Consider $\AA^6$ with coordinates $x_1, x_2, y_1, y_2, z_1, z_2$.
Consider the toric $3$-fold $Y$ given by the GIT quotient $\AA^6 / \! \! / (\CC^*)^3$ where the linear action of $(\CC^*)^3$ on $\AA^6$ is specified by the weights
\begin{equation*}
	\begin{array}{cccccc|c}
		x_1 & x_2 & y_1 & y_2 & z_1 & z_2 &  \\
		\hline
		0 & 0 & 1 & 1 & 1 & 1 & L_1 \\
		0 & 1 & 3 & 1 & 0 & 6 & L_2 \\
		1 & 0 & 1 & 3 & 6 & 0 & L_3
	\end{array}
\end{equation*}
and by the stability condition whose unstable locus is the vanishing locus of the ideal 
\begin{equation} \label{eq:irrelevant_ideal}
	(x_1,x_2,  z_1) \cdot ( x_1, x_2, z_2) \cdot (y_1, y_2) \cdot (y_1, z_2) \cdot (y_2, z_1)
\end{equation}
in the polynomial ring $\CC[x_1, x_2, y_1, y_2, z_1, z_2]$.
Now $L_1, L_2, L_3$ are the $\QQ$-line bundles on $Y$ which come from the standard basis of the character lattice of $(\CC^*)^3$. They form a $\ZZ$-basis of the divisor class group of $Y$.

We see that $\rH^0(Y,  2 L_1 + 6 L_2 + 6 L_3 )$ has dimension $4$ and its monomial basis is made up of the  monomials
\begin{equation*}
	z_1 z_2, \
	y_1 y_2 x_1^2 x_2^2, \
	x_1^4 y_1^2, \
	x_2^4 y_2^2.
\end{equation*}
We consider a special affine subspace $\rH^0(Y,  2 L_1 + 6 L_2 + 6 L_3 )$ and we relate to the surface $X$ considered in \S\ref{sec:surface_X}:

\begin{prop} \label{prop:X_inside_Y} Let $Y$ be the toric $3$-fold defined above. Let $X$ be the toric del Pezzo surface considered in \S\ref{sec:surface_X}. Consider the flat family  $\cX \to \AA^2 = \Spec \CC [ s_1, s_2]$ of hypersurfaces in the linear system $\vert 2 L_1 + 6 L_2 + 6 L_3 \vert$ on $Y$ defined by the equation
\begin{equation} \label{eq:perturbed_hypersurface}
	z_1 z_2 - y_1 y_2 x_1^2 x_2^2 + s_1 x_1^4 y_1^2 + s_2 x_2^4 y_2^2 = 0.
\end{equation}
Then:
\begin{enumerate}
	\item[(A)] the fibre of $\cX \to \AA^2$ over the origin $0 \in \AA^2$ is the toric surface $X$;
	\item[(B)]  the base change of $\cX \to \AA^2$ to $\CC \pow{s_1, s_2}$ is the miniversal $\QQ$-Gorenstein deformation of $X$.
\end{enumerate}
\end{prop}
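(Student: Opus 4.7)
The plan is to prove (A) by recognizing the fibre over the origin as a binomial (hence toric) subvariety of $Y$ whose fan can be read off from the Cox data, and to deduce (B) by showing that the induced formal classifying morphism
\begin{equation*}
\Spf \CC \pow{s_1, s_2} \longrightarrow \DefqG(X)
\end{equation*}
is an isomorphism, via local analysis at the two $\frac{1}{4}(1,1)$ singular points $p_1, p_2$ combined with the computation of \S\ref{sec:deformations_1/4(1,1)}.

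For (A), at $s_1 = s_2 = 0$ the equation $z_1 z_2 - y_1 y_2 x_1^2 x_2^2 = 0$ is binomial in the Cox ring of $Y$, so it cuts out a toric closed subvariety $X_0 \subset Y$. One identifies $X_0$ with $X$ by computing its fan: the intersections of $X_0$ with the six coordinate hyperplanes $\{x_i = 0\}, \{y_i = 0\}, \{z_i = 0\}$ of $Y$ are torus-invariant prime divisors on $X_0$, and a direct calculation from the Cox weight matrix and the stability condition \eqref{eq:irrelevant_ideal} shows that the corresponding six primitive ray generators in $N = \ZZ^2$ are precisely the vertices of $P$ in Figure~\ref{fig:esagono_nero_rosso}, arranged in the correct cyclic order.

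For (B), both $\Spf \CC \pow{s_1, s_2}$ and the hull $\Spf \CC \pow{t_1, t_2}$ of $\DefqG(X)$ from \S\ref{sec:surface_X} are smooth $2$-dimensional formal schemes, so it suffices to prove that the classifying morphism is an isomorphism on tangent spaces. Since the restriction \eqref{eq:restriction_deformation_functor} is smooth and an isomorphism on tangent spaces, this further reduces to computing the induced map on each local factor $\DefqG(p_i \in X)$. The next step is to choose an orbifold chart of $Y$ centred at $p_1$ realising a $\frac{1}{2}(1,1,1)$ quotient in coordinates $(x, y, z)$, and to restrict \eqref{eq:perturbed_hypersurface} to this chart; the key claim is that, after absorbing invertible Cox monomials into units, the restriction becomes the 2-parameter deformation \eqref{eq:equazione_deformazione_1411}, with $s_1$ playing the role of the leading constant parameter and $s_2$ appearing as the coefficient of $z^4$. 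Then by \S\ref{sec:deformations_1/4(1,1)} the classifying map at $p_1$ sends $t_1 \mapsto s_1$. The involution of \eqref{eq:perturbed_hypersurface} that swaps the subscripts $1 \leftrightarrow 2$ corresponds to $-\id_N \in \Aut(X)$ and exchanges $p_1$ with $p_2$; symmetry then yields $t_2 \mapsto s_2$ at $p_2$. Hence the global classifying map is $(t_1, t_2) \mapsto (s_1, s_2)$, an isomorphism.

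The main obstacle is the explicit local coordinate computation at $p_1$: one must identify the affine chart of $Y$ containing $p_1$, express the Cox monomials $z_1 z_2, \, y_1 y_2 x_1^2 x_2^2, \, x_1^4 y_1^2, \, x_2^4 y_2^2$ in the orbifold coordinates $(x, y, z)$, and check that $s_1 x_1^4 y_1^2$ contributes the leading $s_1$ of \eqref{eq:equazione_deformazione_1411} while $s_2 x_2^4 y_2^2$ contributes the $s_2 z^4$ correction term, not the other way around. The substitution $z \mapsto z \sqrt{1 - s_2 z^2}$ from \S\ref{sec:deformations_1/4(1,1)} then guarantees that the $s_2 z^4$ term does not perturb the classifying map for $t_1$.
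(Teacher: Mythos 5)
Your proof of (A) takes a genuinely different route from the paper: the paper builds the closed embedding $X \into Y$ by Laurent inversion (the scaffolding $S$ on $P$ and \cite[Theorem~5.5]{laurent_inversion}) and then reads off the binomial equation, whereas you start from the binomial and try to recognise its zero locus as $X$. That can be made to work, but as written there is a gap in the step ``binomial, hence toric''. The fibre $X_0$ over $0$ is the closure in $Y$ of the $2$-dimensional subtorus $\ker \chi^h \subseteq T_{\tilde N}$, $h = E_+^* + E_-^*$; to conclude $X_0 \simeq X$ you need (i) an explicit identification of the cocharacter lattice of this subtorus with $N$ (this is exactly the map $\theta$ of \S\ref{sec:scaffolding}, and without it the assertion that the six rays ``are the vertices of $P$'' has no meaning), and (ii) the normality of $X_0$: a torus-orbit closure or a binomial hypersurface need not be the \emph{normal} toric variety attached to its fan, and knowing the six rays pins down the fan only after you know $X_0$ is a normal complete toric surface. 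Normality does hold here (the affine hypersurface $z_1 z_2 = y_1 y_2 x_1^2 x_2^2$ in $\AA^6$ has singular locus of codimension $\geq 2$, hence is normal by Serre's criterion, and this descends to the GIT quotient), but it has to be said; the paper's appeal to the Laurent inversion theorem sidesteps the issue because that theorem produces a closed embedding of the normal toric surface $X$ directly.

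For (B) your reduction to tangent spaces via \eqref{eq:restriction_deformation_functor} and the local computation at the two $\frac14(1,1)$ points is exactly the paper's strategy, and the subscript-swapping involution (which does induce $-\mathrm{id}_N$ on $X$) is a legitimate substitute for the second chart computation; note also that your worry about ``not the other way around'' is immaterial, since either assignment of $(s_1,s_2)$ to $(t_1,t_2)$ is an isomorphism (in the paper's chart $\sigma_{x_1,z_1,z_2}$ the constant term is in fact $s_2$, giving $t_2=s_2$ there). The genuine gap is earlier: before one can invoke a classifying morphism $\Spf \CC\pow{s_1,s_2} \to \DefqG(X)$ at all, one must check that the family is a \emph{$\QQ$-Gorenstein} deformation of $X$, and this is a condition at every singular point of $X$, not only at $p_1,p_2$. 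The $\frac13(1,1)$ and $\frac15(1,2)$ points are qG-rigid but admit non-qG deformations, so you must verify that \eqref{eq:perturbed_hypersurface} is formally a product near them — the paper does this chart by chart in \S\ref{sec:linear_system_of_X_in_Y}, using that in those charts the equation is quasi-smooth with the distinguished variable appearing linearly — and you must also check that the fibres avoid the curve $(x_1=x_2=0) \subset Y$ covered by the two remaining cones of $\Sigma_S$ (the $\frac1{12}$ charts), which the paper does using the irrelevant ideal \eqref{eq:irrelevant_ideal}; otherwise extra singularities could appear there. Without these verifications the map to $\DefqG(X)$ is not defined and the tangent-space argument does not get off the ground, so your plan needs this chart analysis added to be complete.
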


We postpone the proof of this proposition: the proof of (A) is given in \S\ref{sec:scaffolding} and the proof of (B) is given in \S\ref{sec:linear_system_of_X_in_Y}. Now we show how this proposition implies our main result.

\begin{proof}[Proof of Theorem~\ref{thm:main}]
	Let $\cM$ and $M$ be as in Proposition~\ref{prop:M_and_cM}. We have that $M$ is a smooth projective irreducible curve. We want to show that $M$ is isomorphic to $\PP^1$.

Let $\cX \to \AA^2$ be the $\QQ$-Gorenstein family considered in Proposition~\ref{prop:X_inside_Y}. Since the central fibre is K-polystable, by openness of K-semistability \cite{BLX}, there exists an open neighbourhood $U$ of the origin in $\AA^2$ such that the fibred product $\cX \times_{\AA^2} U \to U$ induces a morphism $U \to \cM$, which is formally smooth at the origin.

By looking at the action of $\mathrm{Aut}(X)$ on the base of the miniversal $\QQ$-Gorenstein deformation of $X$ (see the proof of Proposition~\ref{prop:M_and_cM}), we see that there are K-polystable surfaces in $U$ non-isomorphic to $X$. Therefore, by composing $U \to \cM$ with $\cM \to M$, we get a non-constant morphism $U \to M$.
By restricting to a general line passing through the origin in $U \subseteq \AA^2$, we get that $M$ is unirational. Therefore $M$ is rational by L\"uroth's theorem.
This concludes the proof of Theorem~\ref{thm:main}.
\end{proof}

\subsection{Proof of Proposition~\ref{prop:X_inside_Y}(A)}
\label{sec:scaffolding}

We need to prove that the surface $X$ is the hypersurface in the $3$-fold $Y$ defined by the equation $z_1 z_2 - y_1 y_2 x_1^2 x_2^2 = 0$.
We apply the Laurent inversion method \cite{laurent_inversion, from_cracked_to_fano, cracked_fano_toric_ci}.

Let $e_1, e_2$ be the standard basis of $N = \ZZ^2$. Consider the decomposition
\[
N = \overline{N} \oplus N_U
\]
where $\overline{N} = \ZZ e_1$ and $N_U = \ZZ e_2$.
Let $\overline{M}$ be the dual lattice of $\overline{N}$.
Let $Z$ be the $T_{\overline{M}}$-toric variety associated to complete fan in the lattice $\overline{M}$ with rays generated by $e_1^*$ and $-e_1^*$.
It is clear that $Z$ is isomorphic to $\PP^1$.
Let $\mathrm{Div}_{T_{\overline{M}}}(Z)$ be the rank-$2$ lattice consisting of the torus invariant divisors on $Z$:
a basis of $\mathrm{Div}_{T_{\overline{M}}}(Z)$ is given by the torus invariant prime divisors on $Z$, namely $E_+$, $E_-$, which are associated to the rays $e_1^*$, $-e_1^*$ respectively.
The divisor sequence \cite[Theorem~4.1.3]{cox} of $Z$ is
\[
0 \longrightarrow \overline{N} = \ZZ e_1
\xrightarrow{
	\rho^\star =
\begin{pmatrix}
1 \\ -1
\end{pmatrix}
}
\mathrm{Div}_{T_{\overline{M}}}(Z) = \ZZ E_+ \oplus \ZZ E_-
\xrightarrow{
\begin{pmatrix}
1 & 1
\end{pmatrix}
}
\Pic(Z) =\ZZ \longrightarrow 0.
\]
We consider the following ample torus invariant divisors on $Z$
\begin{align*}
D_{x_1} &= E_+ + E_- &  D_{y_1} &= - E_+ + 2 E_- \\
D_{x_2} &= E_+ + E_- &  D_{y_2} &= 2E_+ - E_-
\end{align*}
and their corresponding moment polytopes in $\overline{N}$:
\begin{align*}
P_{D_{x_1}} &= \conv{-e_1, e_1} &  P_{D_{y_1}} &= \conv{e_1, 2 e_1}, \\
P_{D_{x_2}} &= \conv{-e_1, e_1} &   P_{D_{y_2}} &= \conv{-2 e_1, -e_1}.
\end{align*}
Now consider the following elements in the lattice $N_U = \ZZ e_2$:
\begin{align*}
	\chi_{x_1} &= 2e_2 & 	 		\chi_{y_1} &= e_2 \\
	 	\chi_{x_2} &= -2e_2 &  	 			\chi_{y_2} &=  -e_2.
\end{align*}
The polytopes 
\begin{align*}
P_{D_{x_1}} + \chi_{x_1} &\qquad P_{D_{y_1}} + \chi_{y_1} \\
P_{D_{x_2}} + \chi_{x_2} &\qquad P_{D_{y_2}} + \chi_{y_2}
\end{align*}
 in $N = \overline{N} \oplus N_U$ are the four red segments in Figure~\ref{fig:esagono_nero_rosso}.
Clearly the polygon $P$ is the convex hull of these four segments.
By \cite[Definition~3.1]{laurent_inversion} the set
\begin{equation*}
S = \left\{
\left( D_{x_1}, \chi_{x_1}   \right), \ 
\left( D_{x_2}, \chi_{x_2}   \right), \ 
\left( D_{y_1}, \chi_{y_1}   \right), \ 
\left( D_{y_2}, \chi_{y_2}   \right)
   \right\}
\end{equation*}
is a `scaffolding' on the Fano polygon $P$.

Consider the rank-$3$ lattice $\tilde{N} := \mathrm{Div}_{T_{\overline{M}}}(Z) \oplus N_U = \ZZ E_+ \oplus \ZZ E_- \oplus \ZZ e_2$.
Let $\tilde{M}$ be the dual lattice of $\tilde{N}$ and let $\langle \cdot, \cdot \rangle \colon \tilde{M} \times \tilde{N} \to \ZZ$ be the duality pairing.
Following \cite[Definition~A.1]{laurent_inversion} we consider the polytope $Q_S \subseteq \tilde{M}_\RR$ defined by the following inequalities:
\begin{align*}
\langle \ \cdot \ , - D_{x_1} + \chi_{x_1} \rangle &\geq -1, \\
\langle \ \cdot \ , - D_{x_2} + \chi_{x_2} \rangle &\geq -1, \\
\langle \ \cdot \ , - D_{y_1} + \chi_{y_1} \rangle &\geq -1, \\
\langle \ \cdot \ , - D_{y_2} + \chi_{y_2} \rangle &\geq -1, \\
\langle \ \cdot \ , E_+ \rangle &\geq 0, \\
\langle \ \cdot \ , E_- \rangle &\geq 0.
\end{align*}
Let $\Sigma_S$ be the normal fan of $Q_S$. One can see that $\Sigma_S$ is the complete simplicial fan in $\tilde{N} = \mathrm{Div}_{T_{\overline{M}}}(Z) \oplus N_U$ with rays generated by the following vectors:
\begin{align*}
x_1 &= - D_{x_1} + \chi_{x_1} = - E_+ - E_- + 2 e_2\\
x_2 &= - D_{x_2} + \chi_{x_2} = - E_+ - E_- - 2 e_2 \\
y_1 &= - D_{y_1} + \chi_{y_1} = E_+ - 2 E_- + e_2 \\
y_2 &= - D_{y_2} + \chi_{y_2} = - 2 E_+ + E_- - e_2\\
z_1 &= E_+, \\
z_2 &= E_-.
\end{align*}

Let $Y$ be the $T_{\tilde{N}}$-toric variety associated to the fan $\Sigma_S$.
Thus $Y$ is a $\QQ$-factorial Fano $3$-fold with Cox coordinates $x_1, x_2, y_1, y_2, z_1, z_2$ .
With respect to the basis of $\tilde{N}$ given by $E_+$, $E_-$, $e_2$, the rays of the fan $\Sigma_S$ are the columns of  the matrix
\[
\begin{pmatrix}
-1 & -1 & 1 & -2 & 1 & 0 \\
-1 & -1 & -2 & 1 & 0 & 1 \\
2 & -2 & 1 & -1 & 0 & 0
\end{pmatrix}.
\]
The transpose of this matrix gives an injective $\ZZ$-linear homomorphism $\tilde{M} \to \ZZ^6$. By \cite[Theorem~4.1.3]{cox} the cokernel of this is the divisor map of $Y$ and is isomorphic to the divisor class group of $Y$.
In this case, one finds that the divisor map  of $Y$ is the $\ZZ$-linear homomorphism $\ZZ^6 \to \mathrm{Cl}(Y) \simeq \ZZ^3$ given by the following matrix.
\begin{equation*}
\begin{array}{cccccc|c}
	x_1 & x_2 & y_1 & y_2 & z_1 & z_2 &  \\
	\hline
0 & 0 & 1 & 1 & 1 & 1 & L_1 \\
0 & 1 & 3 & 1 & 0 & 6 & L_2 \\
1 & 0 & 1 & 3 & 6 & 0 & L_3
\end{array}
\end{equation*}
Here $L_1, L_2, L_3$ are the elements of the chosen $\ZZ$-basis of $\mathrm{Cl}(Y)$.
This $3 \times 6$ matrix gives the weights of a linear action of the torus $(\CC^*)^3$ on $\AA^6$.
By \cite[\S5.1]{cox}
$Y$ is the GIT quotient of this action with respect to the stability condition given by the irrelevant ideal
\[
(x_1,x_2,  z_1) \cdot ( x_1, x_2, z_2) \cdot (y_1, y_2) \cdot (y_1, z_2) \cdot (y_2, z_1).
\]
Therefore $Y$ is the toric $3$-fold considered in \S\ref{sec:conclusion_proof}.

We now consider the injective linear map
\[
\theta := \rho^\star \oplus \mathrm{id}_{N_U} \colon
N = \overline{N} \oplus N_U \longrightarrow
\tilde{N} = \mathrm{Div}_{T_{\overline{M}}}(Z) \oplus N_U.
\]
By \cite[Theorem~5.5]{laurent_inversion} $\theta$ induces a toric morphism $X \to Y$ which is a closed embedding.
We want to understand the ideal of this closed embedding in the Cox ring of $Y$ by using the map $\theta$.

We follow \cite[Remark~2.6]{from_cracked_to_fano}.
We see that $\theta(N)$ is the hyperplane defined by the vanishing of $h = E_+^* + E_-^* \in \tilde{M}$.
Now we compute the duality pairing between $h$ and the primitive generators of the rays of $\Sigma_S$:
$\langle h, x_1 \rangle = \langle h, x_2 \rangle = -2$,
$\langle h, y_1 \rangle = \langle h, y_2 \rangle = -1$,
$\langle h, z_1 \rangle = \langle h, z_2 \rangle = 1$.
We get that the polynomial
\begin{equation} \label{eq:hypersurface}
z_1 z_2 - y_1 y_2 x_1^2 x_2^2
\end{equation}  is the generator of the ideal of the closed embedding $X \into Y$ in the Cox ring of $Y$.
In other words, $X$ is the hypersurface in $Y$ defined by the vanishing of the polynomial~\eqref{eq:hypersurface} in the Cox coordinates of $Y$.
This concludes of (A) in Proposition~\ref{prop:X_inside_Y}.

\subsection{Proof of Proposition~\ref{prop:X_inside_Y}(B)}
\label{sec:linear_system_of_X_in_Y}

We want to show that, after base change to $\CC \pow{s_1, s_2}$, the family of hypersurfaces in $Y$ defined by the vanishing of \eqref{eq:perturbed_hypersurface}
is the miniversal $\QQ$-Gorenstein deformation of $X$.
Since the map in \eqref{eq:restriction_deformation_functor} is smooth and induces an isomorphism on tangent spaces, we need to check that locally this family induces the miniversal deformations of the singularity germs of $X$.
Let $t_1$ and $t_2$ be the two smoothing parameters of the two $\frac{1}{4}(1,1)$ singularities of $X$, as fixed in \S\ref{sec:deformations_1/4(1,1)}.
We proceed by analysing each chart of the affine open cover of $Y$ given by the fan $\Sigma_S$.
\begin{itemize}
	
					\item The cone $\sigma_{ x_1, z_1, z_2}$ gives the isolated singularity $\frac{1}{2}(1,1,1)_{ x_1, z_1, z_2}$ on $Y$.
					In this chart, by dehomogenising \eqref{eq:perturbed_hypersurface}, we get the equation
	$z_1 z_2 - x_1^2  + s_1 x_1^4  + s_2 = 0$ in the orbifold coordinates. This is exactly the $\QQ$-Gorenstein smoothing of $\frac{1}{4}(1,1)$ described at the end of \S\ref{sec:deformations_1/4(1,1)}. So we have $t_2 = s_2$.
	
	\item The cone $\sigma_{ x_2, z_1, z_2}$ gives the isolated singularity $\frac{1}{2}(1,1,1)_{ x_2, z_1, z_2}$ on $Y$.
	In this chart we get the equation
$z_1 z_2 -   x_2^2 + s_1  + s_2 x_2^4 = 0$.
We are in a completely analogous situation as the previous case, so  $t_1 = s_1$.
	
\item The cone $\sigma_{ x_1, y_2, z_2}$ gives the isolated singularity $\frac{1}{5}(2,1,4)_{ x_1, y_2, z_2}$ on $Y$.
In this chart we get the equation
$z_2 - y_2 x_1^2  + s_1 x_1^4  + s_2  y_2^2 = 0$, which is quasi-smooth because there is no constant term and $z_2$ appears with degree $1$. So all fibres of $\cX \to \AA^2$ have a $\frac{1}{5}(1,2)$ singularity at the $0$-stratum of this chart of $Y$.

\item The cone $\sigma_{ x_2, y_1, z_1}$ gives the isolated singularity $\frac{1}{5}(1,2,4)_{ x_2, y_1, z_1}$ on $Y$. The equation is
$ z_1 - y_1   x_2^2 + s_1  y_1^2 + s_2 x_2^4 = 0$ and, in a way analogous to the previous case, we get a $\frac{1}{5}(1,2)$ singularity on every fibre of $\cX \to \AA^2$ at the $0$-stratum of this chart of $Y$.

			\item The cone $\sigma_{ x_1, y_1, z_1}$ gives the non-isolated singularity $\frac{1}{3}(1,1,0)_{ x_1, y_1, z_1}$.
			The equation is
$z_1 - y_1 x_1^2  + s_1 x_1^4 y_1^2 + s_2 = 0$. Since it is quasi-smooth, this gives a $\frac{1}{3}(1,1)$ singularity on every fibre of $\cX \to \AA^2$ at a point on the curve $(x_1 = y_1 = 0) \subset Y$.
	
	\item The cone $\sigma_{ x_2, y_2, z_2}$ gives the non-isolated singularity $\frac{1}{3}(1,1,0)_{ x_2, y_2, z_2}$. The equation is
$ z_2 - y_2  x_2^2 + s_1  + s_2 x_2^4 y_2^2 = 0$ and, similarly to the previous case, we have a $\frac{1}{3}(1,1)$ singularity on every fibre of $\cX \to \AA^2$ at a point on the curve $(x_2 = y_2 = 0) \subset Y$.
	
		\item In the fan $\Sigma_S$ there are two $3$-dimensional cones which we have not been analysed yet: these are $\sigma_{x_1, x_2, y_1}$, whose corresponding chart on $Y$ is the non-isolated singularity $\frac{1}{12}(3,1,4)_{x_1, x_2, y_1}$, and $\sigma_{x_1, x_2, y_2}$, which gives the non-isolated singularity $\frac{1}{12}(4,1,3)_{x_1, x_2, y_2}$ on $Y$.
		We want to show that it is useless to analyse these cones.
		Let $V$ denote
		the complement in $Y$ of the union of the already analysed charts; $V$ is made up of $3$ torus-orbits: the $0$-stratum corresponding to $\sigma_{x_1, x_2, y_1}$, the $0$-stratum corresponding to $\sigma_{x_1, x_2, y_2}$, and the $1$-stratum corresponding to $\sigma_{x_1, x_2}$.
		In other words $V$ is the projective curve $(x_1 = x_2 = 0)$ in $Y$. By looking at the equation \eqref{eq:perturbed_hypersurface} and at the irrelevant ideal~\eqref{eq:irrelevant_ideal} it is clear that $V$ does not intersect any fibre of $\cX \to \AA^2$.
		
\end{itemize}

To sum up, we have that the family $\cX \to \AA^2$ realises the $\QQ$-Gorenstein smoothings of the two $\frac{1}{4}(1,1)$ points on $X$ and leaves the $\frac{1}{3}(1,1)$ points and $\frac{1}{5}(1,2)$ points untouched (i.e.\ the deformation is formally isomorphic to a product around these points of the central fibre).
By versality the family $\cX \to \AA^2$ induces a morphism $\mathrm{Spf}(\CC \pow{s_1, s_2}) \to \DefqG(X)$, which is associated to the isomorphism $\CC \pow{s_1, s_2} \simeq \CC \pow{t_1, t_2}$, where $s_1 = t_1$ and $s_2 = t_2$.
In other words, the base change of $\cX \to \AA^2$ to $\CC \pow{s_1, s_2}$ is the miniversal $\QQ$-Gorenstein deformation of $X$.
This concludes the proof of Proposition~\ref{prop:X_inside_Y}(B).

%%%%%%

\section{Mirror symmetry}
\label{sec:mirror_symmetry}

In \cite{procams} some conjectures for del Pezzo surfaces were formulated.
In this section we sketch some evidence for these conjectures in the case of the toric del Pezzo surface $X$ and of its $\QQ$-Gorenstein deformations.
In addition to \cite{procams}, we refer the reader to \cite{fanosearch, petracci_roma, quantumFano3folds} and to the references therein for more details about the notions introduced below.

\subsection{Combinatorial avatars of connected components of moduli of del Pezzo surfaces}
According to \cite[Conjecture~A]{procams} 
there is a $1$-to-$1$ correspondence between
\begin{itemize}
	\item connected components of the moduli stack of del Pezzo surfaces (with a toric degeneration) 
	and
	\item    mutation equivalence classes of Fano polygons.
\end{itemize}
Here a \emph{Fano polygon} is a lattice polygon whose face fan defines a del Pezzo surface (an example is $P$ in \S\ref{sec:surface_X});
and \emph{mutation} is a certain equivalence relation on Fano polygons introduced in \cite{sigma} --- we do not give further details here and we refer the reader to \cite{procams, minimal_polygons}.

The correspondence works in the following way: to (the mutation equivalence class of) the Fano polygon $P$ one associates the connected component $\scrM$ of the moduli stack of  del Pezzo surfaces which contains the surface $X$, which is the toric del Pezzo surface associated to the face fan of $P$.
One has that $\scrM$ is smooth and contains $\cM$ (the connected component of the K-moduli stack parametrising K-semistable del Pezzo surfaces and containing $X$) as an open substack.

\subsection{Classical period} \label{sec:classical_period}

Consider the family of \emph{maximally mutable} Laurent polynomials with Newton polytope $P$ and with \emph{T-binomial edge coefficients} \cite[Definition~4]{procams}.
This is the $6$-dimensional family
\begin{align*}
	f &= x^2 y + x^{-2} y^{-1} + (x + 2 + x^{-1}) (y^2 + y^{-2}) \\
	&+	a_1 xy + a_2 x^{-1} y^{-1} + b_1 x + b_2 x^{-1} + c_1 x y^{-1} + c_2 x^{-1} y
\end{align*}
in $\QQ[a_1, a_2, b_1, b_2, c_1, c_2][x^\pm, y^\pm]$, where $a_1, a_2, b_1, b_2, c_1, c_2$ are indeterminates.
\begin{figure}
	\centering
	\def\svgwidth{0.4\textwidth}
	\input{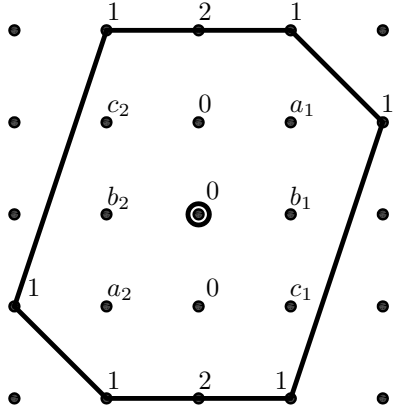}
	\bigskip
	\caption{The coefficients of the maximally mutable Laurent polynomials with Newton polytope $P$ and with T-binomial edge coefficients (see \S\ref{sec:classical_period})}
	\label{fig:polinomio}
\end{figure}
In Figure~\ref{fig:polinomio} the coefficients of $f$ are written next to the corresponding lattice points of $P$.

The \emph{classical period} of $f$ is the power series
\begin{equation*}
\pi_f(t) = \left( \frac{1}{2 \pi \mathrm{i}} \right)^2 \int_{\{ (x,y) \in (\CC^*)^2 \mid |x| = |y| = \epsi \}} \frac{1}{1-t f(x,y)} \frac{\rmd x}{x} \frac{\rmd y}{y}
\end{equation*}
in $\QQ[a_1, a_2, b_1, b_2, c_1, c_2] \pow{t}$, for some $0 < \epsi \ll 1$.
The first coefficients of $\pi_f$ are:
\begin{align*}
\pi_f(t) &= 1 +
2( a_1 a_2 + b_1 b_2 + c_1 c_2 + 7) t^2 + \\
&+ 6 (a_1 b_1 + 2a_1 c_2 + a_2 b_2 + 2 a_2 c_1 + 4 b_1 + 4 b_2 + c_1 + 
c_2) t^3 + \cdots.
\end{align*}

\subsection{Quantum period} \label{sec:quantum}

Let $\Xprime$ be the surface corresponding to a general point in $\scrM$; in other words, $\Xprime$  is a general $\QQ$-Gorenstein deformation of the toric surface $X$.
The \emph{quantum period} of $\Xprime$ \cite[Definition~3.2]{oneto_petracci} is a certain generating function for genus zero Gromov--Witten invariants of $\Xprime$ which depends on certain parameters related to the singularities of $\Xprime$.
In this case there are $6$ parameters because the singular locus of $\Xprime$ is made up of $2$ points of type $\frac{1}{3}(1,1)$ and $2$ points of type $\frac{1}{5}(1,2)$.

In general it is very difficult to compute the quantum period of a Fano orbifold.
Since $\Xprime$ is a hypersurface in the toric Fano $Y$, one can use the quantum Lefschetz theorem to compute a specialisation of the quantum period of $\Xprime$, i.e.\ the power series $G_\Xprime \in \QQ \pow{t}$ obtained from the quantum period by setting the parameters equal to some numbers. This can be done as follows.
We use the notation as in \S\ref{sec:scaffolding}. One can see that the nef cone of $Y$ is spanned by the divisor classes
\begin{gather*}
	L_1 + 3 L_2 + 3 L_3, \\
	4 L_1 + 9 L_2 + 9 L_3, \\
	5 L_1 + 9 L_2 + 15 L_3, \\
	5L_1 + 15 L_2 + 9 L_3.
	\end{gather*}
We consider the cone $\Lambda \subseteq \RR^3$ defined by the inequalities
\begin{gather*}
	l_1 + 3 l_2 + 3 l_3 \geq 0, \\
	4 l_1 + 9 l_2 + 9 l_3 \geq 0 , \\
	5 l_1 + 9 l_2 + 15 l_3 \geq 0, \\
	5l_1 + 15 l_2 + 9 l_3 \geq 0
\end{gather*}
and by the inequalities
\begin{gather*}
	l_3 \geq 0, \\
	l_2 \geq 0, \\
	l_1 + 3 l_2 + l_3 \geq 0, \\
	l_1 + l_2 + 3 l_3 \geq 0, \\
	l_1 + 6 l_3 \geq 0, \\
	l_1 + 6 l_2 \geq 0.
	\end{gather*}
The first inequalities say that we are taking (the closure of) the cone of the effective curves in $\mathrm{N}_1(Y)_\RR$, i.e.\ we are taking the dual of the nef cone of $Y$;
with the second inequalities we are taking the curve classes  on which the prime torus-invariant divisors of $Y$ have non negative degrees.

By using methods similar to \cite{oneto_petracci}, one can prove that a specialisation of the quantum period of $\Xprime$ is the power series $G_\Xprime(t) \in \QQ \pow{t}$ equal to
\begin{equation*}
\sum_{(l_1, l_2, l_3) \in \Lambda \cap \ZZ^3}
\frac{(2 l_1 +6 l_2 +6 l_3)!}{l_3! \ l_2! \ (l_1 + 3 l_2 + l_3)! \ (l_1 + l_2 + 3 l_3)! \ (l_1 + 6 l_3)! \ (l_1 + 6 l_2)!} t^{2 l_1 +5 l_2 + 5 l_3}.
\end{equation*}
Notice the following numerology: at the denominator there are the factorial of the degrees of the prime torus-invariant divisors of $Y$,
 the numerator is the factorial of the degree of the $\QQ$-line bundle $\cO_Y(\Xprime) = 2L_1 + 6L_2 + 6L_3$,
the exponent of $t$ is the degree of the $\QQ$-line bundle $-K_Y - \Xprime = 2L_1 + 5L_2 + 5 L_3$, which by adjunction restricts to $-K_\Xprime$ on $\Xprime$.

If $\sum_{d \geq 0} C_d t^d$ is the quantum period of $\Xprime$, then the \emph{regularised quantum period} of $\Xprime$ is $\sum_{d \geq 0} d! \ C_d t^d$.
From the computation above one computes the first coefficients of a specialisation of the regularised quantum period of $\Xprime$:
\begin{align*}
	\widehat{G}_\Xprime(t) &= 1 + 16 t^2 + 936 t^4
	+ 520 t^5 + 76840 t^6 + 131880 t^7 + 7360920 t^8 + \\
	&+ 22806000 t^9 + 770459256 t^{10}
	+ 3451657440 t^{11}
	+ 85553394696 t^{12} + \cdots. \\
	\end{align*}

\subsection{Equality of periods}

A second mirror-symmetric expectation \cite[Conjecture~B]{procams}  is that there is an equality between
\begin{itemize}
	\item  the regularised quantum period of a general surface $\Xprime$  in $\scrM$ and
	\item the classical period of the family of maximally mutable Laurent polynomials with Newton polytope $P$ and with T-binomial edge coefficients.
\end{itemize}
Notice that in our case both periods depend on $6$ parameters which should be identified.

Combining \S\ref{sec:classical_period} and \S\ref{sec:quantum} one can verify the equality between a specialisation of the regularised quantum period of $\Xprime$ and the classical period of the Laurent polynomial obtained from $f$ by setting $a_1 = a_2 = 1$ and $b_1 = b_2 = c_1 = c_2 = 0$:
\[
\widehat{G}_\Xprime(t) = \pi_f(t) \vert_{a_1 = a_2 = 1,~b_1 = b_2 = c_1 = c_2 = 0}.
\]

%\end{comment}

\bibliography{Biblio_note3}
\end{document}